\documentclass[12pt]{article}

\usepackage[top=0.75in,left=1in]{geometry}
\usepackage{amsmath,amsthm,amssymb}
\usepackage{enumerate, xcolor, graphicx, hyperref}

\newtheorem{thm}{Theorem}

\newtheorem{prop}[thm]{Proposition}
\newtheorem{cor}[thm]{Corollary}

\theoremstyle{definition}
\newtheorem{defi}[thm]{Definition}

\newcommand{\N}{\mathbb{N}}

\newcommand{\R}{\mathbb{R}}

\newcommand{\E}{\mathbb{E}}
\newcommand{\SP}{P} 
\newcommand{\LP}{R} 
\DeclareMathOperator{\Var}{Var}

\begin{document}

\title{
When Does the Dice Sum Become Prime?
}

\author{Christoph Koutschan, Tipaluck Krityakierne,\\ Thotsaporn Aek Thanatipanonda}

\maketitle
\thispagestyle{empty}

\begin{abstract}
Given a (possibly infinite) subset~$A$ of the natural numbers, we ask how many times a fair six-sided die must be rolled until the rolled numbers add up to an element of~$A$. Using a one-dimensional dynamic programming recursion together with truncation and rigorous error bounds, we compute the expected number of rolls efficiently and with very high accuracy. When $A$ is the set of prime numbers, the irregular distribution of primes makes it difficult to obtain explicit error estimates. Nevertheless, the density of primes implies that the associated survival probability decays exponentially fast, which enables highly accurate truncation estimates. As a result, our calculations yield significantly sharper estimates for this expectation and its higher moments than the original results of Conroy, Alon, and Malinovsky. In particular, we determine the expectation to more than $1000$ decimal places.
\end{abstract}

\section{Introduction}

The study of stopping times arising from stochastic processes often reveals surprisingly rich structure, even in seemingly elementary settings. In this paper, we consider the following natural question: given a fair six-sided die, how many times must we roll it until the cumulative sum first hits a prime number? This problem was set by the contributing editor Anirban DasGupta in the Bulletin of the Institute of Mathematical Statistics~\cite{DasG1}:

\begin{quote}
Imagine that you are tossing an honest die repeatedly, and your score after the $n$th roll, say $S_n$, is the sum of the first $n$ rolls. This, of course, is an integer between $n$ and~$6n$. Will $S_n$ ever be a prime number for some~$n$? For infinitely many~$n$? What can we say about how many rolls it takes for $S_n$ to be a prime number for the first time? Does it take just a few rolls? Is the expected waiting time finite? Can we give an approximate value for the expected waiting time? And so on.
\end{quote}

A partial solution~\cite{DasG2} established a lower bound of $2.34$ and provided a crude truncation-based estimate. The finiteness of the variance was left unresolved.
Although closely related formulations had appeared earlier in computational form, notably in Conroy's 2018 online collection \emph{A Collection of Dice Problems}~\cite{Con}, DasGupta's formulation brought the problem to wider attention and initiated its systematic study.

Let $\tau$ denote the number of rolls until the cumulative sum first hits a prime.
In Conroy's work, the expectation~$\E[\tau]$ is formulated as
\[
  \E[\tau]=\sum_{i\ge1} i\cdot p_i,
\]
where $p_i$ is the probability that the cumulative sum is prime for the first time after $i$ rolls. Conroy evaluated this quantity numerically using a PARI/GP implementation. By summing contributions up to $10^4$ rolls and estimating the remaining tail, he reported a value correct to $500$ decimal digits, although without a rigorous error bound and without addressing the variance.

A first rigorous treatment was given by Alon and Malinovsky~\cite{AM}, who developed a two-parameter dynamic-programming algorithm. Using tail-sum identities,
\[
\mathbb{E}[\tau]=\sum_{k\ge1}\mathbb{P}(\tau\ge k), 
\quad 
\mathbb{E}[\tau^2]=\sum_{k\ge1}(2k-1)\mathbb{P}(\tau\ge k),
\]
they computed
\[
\mathbb{E}[\tau]\approx 2.4284, \qquad \operatorname{Var}(\tau)\approx 6.2427,
\]
with rigorous error bounds below $10^{-7}$ and $10^{-4}$, respectively, thereby establishing finiteness of the variance.

In parallel, Martinez and Zeilberger~\cite{Z1} introduced a symbolic approach based on truncated bivariate generating functions. They computed the conditional expectation
\[
M_R=\mathbb{E}[\tau \mid \tau\le R],
\]
obtaining exact expressions and high-precision numerical values. By comparing truncations such as $M_{400}$ and $M_{1000}$, they inferred a \emph{non-rigorous but practically certain} estimate of the limiting value $M_\infty$, accurate to over $100$ digits. Their method extends to other target sets and to dice with different numbers of faces, but does not provide explicit error bounds.

Further developments include variants with different dice and stopping criteria. Chern~\cite{Chern} studied the case of an $M$-sided die and proved
\[
\mathbb{E}[\tau(M)] = \log M + O(\log \log M)
\quad \text{as } M\to\infty.
\]
More recently, Alon, Malinovsky, Martinez, and Zeilberger~\cite{Z2} considered the time $L_k$ required for the cumulative sum to hit a prime $k$ times. They showed that
\[
\mathbb{E}[L_k]=(1+o(1))\,k\log k,
\]
and that $L_k$ is concentrated around $k\log k$, together with numerical results for moderate~$k$.
\bigskip

We now describe the probabilistic model underlying our analysis.
We roll a six-sided fair die infinitely many times. 
Let $X_i$ be a random variable that records the outcome of roll~$i$. 
For $s,t\in\N$, let $S_t^{(s)}=s+X_1+X_2+\dots+X_t$ denote the partial sum of the numbers that have been rolled, starting at $s$.

Let $A\subseteq\N$ be the set of target numbers. 
Let $\tau$ denote the number of rolls until the cumulative sum first hits an element of~$A$, and let $\tau_s$ denote the corresponding hitting time when starting from $s$; thus, $\tau=\tau_0$. 

\emph{Our goal is to compute $\mathbb{E}[\tau]$, the expected number of rolls required to hit an element of~$A$.}

The expectation of $\tau_s$ satisfies the backward recursion
\begin{equation} \label{Main}
\mathbb{E}[\tau_{s}]  = 
\begin{cases}
0, & \text{if } s \in A, \\
1+\dfrac{1}{6}\sum_{i=1}^6 \mathbb{E}[\tau_{s+i}], & \text{if } s \notin A.
\end{cases}
\end{equation}

To compute $\mathbb{E}[\tau]$ using the recursion~\eqref{Main}, we must specify boundary conditions of the form $\mathbb{E}[\tau_{N+i}]$, $1 \leq i \leq 6$, for some (typically large) index $N\in\N$. However, these values are not known explicitly, which makes a direct computation challenging.

However, if we can bound $\mathbb{E}[\tau_{N+i}]$, that is, if there exist constants $L$ and $U$ such that
\[
L < \mathbb{E}[\tau_{N+i}] < U, \qquad 1 \leq i \leq 6,
\]
then we obtain the bounds
\begin{equation} \label{squeeze}
  E_{N,L}[\tau] < \mathbb{E}[\tau] < E_{N,U}[\tau],
\end{equation}
where $E_{N,B}[\tau]$ denotes the value obtained from the recursion~\eqref{Main} with boundary conditions
\[
E_{N,B}[\tau_{N+1}] = \cdots = E_{N,B}[\tau_{N+6}] = B.
\]
In what follows, we will refer to~\eqref{squeeze} as the \emph{squeeze theorem}. This result allows us to compute $\mathbb{E}[\tau]$ to high precision even when the bounds $L$ and $U$ are relatively crude.
\medskip

A related dynamic-programming formulation was developed in our previous work on square target sets~\cite{KT}. Although the same backward recursion underlies both works, the analysis differs fundamentally due to the nature of the target set. In the square case, the strong structure of perfect squares enables a detailed overshoot analysis beyond the cutoff, yielding explicit error bounds; no such approach is available for primes. Instead, the error must be controlled via the survival probability, which depends on the density of primes. As a result, the prime-target case is substantially more demanding and requires different techniques to obtain rigorous high-precision results.

\medskip
The remainder of this paper specializes to the case where $A$ is the set of prime numbers. Applying the squeeze theorem with $N = 72\,000$, $L=0$, and $U=412$, we obtain the following result, which constitutes the main theorem of this work.

\begin{thm}
\label{thm:main}
The average number $\mathbb{E}[\tau]$ of dice rolls until the sum of the numbers rolled hits a prime number is 
\begin{align*} 
&2.42849791369350423036608190624229927163420183134471182664689592112165213232 \\ &5737986046093270565805428524160047589165194841516565634336164772565943485751 \\ 
&2010047314053588414080268265133765227685765273680343136681232417851326056596 \\ &6869474098555533124510113237977013366168026086615306805134626003385548615552 \\ &7486707720337438281428936359688200591234176865460409383892375872620193186873 \\ &2128985848910810088718920092405717956093519242531532053973738374402422790918 \\ &5701767244213100211303319283551672174728414550422819144119103824457368554069 \\ &1517912556359344138037518259155774051462837886562825001730474860557486750525 \\ &4863249451423188101969406014832018095632197861877558511567898539965451303195 \\ &4764655112873299739474517545573277841917821407062676232822835864994568921908 \\ &4419782406500153429522671728320326784872972218052322448914540276949535928090 \\ &6084323077864236944937185738590229417054050669573798475404397227733027214107 \\ &0665671620511340723427785010955309102520187918021206115592607812763026968158 \\ 
&28196768900528695067912258789202850410979...\ . 
\end{align*}
\end{thm}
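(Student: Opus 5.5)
The plan is to apply the squeeze theorem~\eqref{squeeze} with $N=72\,000$, $L=0$ and $U=412$. This leaves three tasks: justify the two boundary bounds, prove the monotonicity that makes the squeeze valid, and show that the two computed values agree to more than $1000$ digits.

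\emph{Monotonicity.} For fixed $N$, the map $B\mapsto E_{N,B}[\tau_s]$ is affine in $B$ for every $s\le N$. Downward induction from $s=N$ shows the slope equals $\mathbb{P}(\text{the walk from } s \text{ passes beyond } N \text{ without hitting a prime})$. This slope is nonnegative, so the map is nondecreasing. The true values satisfy the same recursion with boundary values $\mathbb{E}[\tau_{N+i}]$, so $L\le \mathbb{E}[\tau_{N+i}]\le U$ for $1\le i\le 6$ gives~\eqref{squeeze}.

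\emph{Boundary bounds.} The bound $\mathbb{E}[\tau_{N+i}]\ge 0$ is trivial. For the upper bound, note that every prime gap below a few times $N$ is known explicitly; the maximal gaps tabulated up to very large heights are far below $412\cdot 6$. Starting from $s>N$, I would bound $\mathbb{E}[\tau_s]$ by a renewal-type argument. From any point, within some window of $g$ consecutive integers (with $g$ at least the local maximal prime gap) there lies a prime $p$. The walk lands on any fixed integer ahead with probability at least some $c>0$; by the renewal theorem the landing probability tends to $2/7$ (the mean step is $7/2$), and a uniform lower bound $c$ can be computed exactly. Each block of roughly $g/3.5$ rolls therefore succeeds with probability at least about $2/7$, giving a geometric bound $\mathbb{E}[\tau_s]\le (g/3.5+6)/c$ or similar.

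To make this rigorous for all $s>N$, including arbitrarily far out, I would use a known explicit prime-gap result, such as the Bertrand-type bounds of Dusart (a prime in $[x,x(1+1/(25\log^2 x))]$). This grows with $x$, so the naive argument fails at infinity. I would instead combine it with density. The survival probability decays exponentially because the walk keeps meeting primes with frequency about $1/\log x$ per unit step, while the walk spends only $O(\log x)$ rolls near height $x$. Summing the survival probabilities over successive dyadic ranges then gives a convergent bound that I would evaluate numerically to be below $412$. I expect this step to be the main obstacle: obtaining a clean, verifiable constant like $412$ uniformly over all $s>N$. My first attempt would be to bound the per-roll hitting probability below by $\frac{2}{7}\cdot\frac{1}{\log s}$ up to explicit error terms, then sum the resulting tail.

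\emph{Computation.} Finally, I would run the one-dimensional recursion~\eqref{Main} backward from $s=N$ to $s=0$ in exact rational or high-precision interval arithmetic, once with $B=0$ and once with $B=412$. The difference $E_{N,412}[\tau]-E_{N,0}[\tau]=412\cdot\mathbb{P}(\tau_0 \text{ passes } N \text{ without a prime})$ is tiny, since survival to height $72\,000$ decays like roughly $\exp(-cN/\log N)$. I would verify numerically that this difference is below $10^{-1000}$ and that both values share the displayed digits.
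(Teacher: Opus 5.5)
Your framework matches the paper's: the squeeze with $N=72\,000$, $L=0$, $U=412$; your slope argument is exactly the Structure Theorem $E_{N,B}[\tau]=E_{N,0}[\tau]+B\cdot\SP_N(0)$; and the final exact backward computation is the same. The gap is the step you flag yourself. You never produce a rigorous upper bound on $\E[\tau_{N+i}]$, and the route you sketch would not give one. Your concrete ``first attempt'' is a lower bound of order $\frac27\cdot\frac{1}{\log s}$ on the per-roll probability of hitting a prime, and it is false pointwise. From any $s$ with $s+1,\dots,s+6$ all composite, that probability is $0$, and such $s$ are common near $72\,000$, where the mean prime gap is about $11$. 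An averaged substitute has to control how the landing events on successive primes combine. Your dyadic summation of survival probabilities simply assumes the exponential decay of survival, which is what needs proof. Your reason for dropping the simpler argument is also mistaken, as explained below.

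The paper's missing ingredient is Proposition~\ref{am}: $\LP_s(n)\le(5/6)^{\pi(s,n-1)}$, proved by induction on $n$. If $q$ of $n-1,\dots,n-6$ are primes in $(s,n-1]$, only $6-q$ terms of the recursion are nonzero, and $\frac{6-q}{6}(5/6)^{-q}\le 1$. Conditioning on the stopping prime then gives $\E[\tau_N]\le\sum_{p\ge N}(p-N)(5/6)^{\pi(N,p-1)}$. The part with $p\le 100\,003$ is computed directly ($411.7159\ldots$), and the tail is bounded by an integral using the Rosser--Schoenfeld bounds on $\pi(x)$. Since none of $N,\dots,N+6$ is prime, the same bound dominates every $\E[\tau_{N+i}]$.

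Alternatively, your own geometric-trials idea can be completed. Let a trial from $x$ run until the walk stops or passes the next prime $p(x)$. It stops with probability at least $\min_{m\ge1}u_m=\frac16$, where $u_m$ is the renewal sequence. By Wald's identity its expected length is at most $(p(x)-x+6)/3.5$. The strong Markov property then gives $\E[\tau_s]\le\sum_{k\ge0}(5/6)^k(g_k+6)/3.5$, where $g_k$ bounds $p(x)-x$ over positions $x$ reachable at the start of trial $k$. This does not fail at infinity: with Dusart's bound the position grows per trial by at most a factor $1+1/(25\log^2 x)$ plus $6$, which the factor $\frac56$ easily beats. Tabulated maximal gaps then give an explicit constant of order $10^2$. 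One of these arguments must be supplied; as written, the proposal does not prove the theorem.
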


We know the value of $\mathbb{E}[\tau]$ to 1027 decimal places, since $E_{N,L}[\tau]$ and $E_{N,U}[\tau]$ agree to this level of precision. This is a significant improvement over the result of Alon and Malinovsky~\cite{AM}, who showed that $\mathbb{E}[\tau]$ is at least $2.428497913693504$.

\medskip
The paper is organized as follows. Section~\ref{sec:structure} establishes a structural relation between truncated expectations and survival probabilities, which underpins our high-precision computation. Section~\ref{sec:bounds} derives a rigorous upper bound for the boundary values using estimates from the distribution of prime numbers. Section~\ref{sec:moments} extends the method to compute higher moments with comparable accuracy. Finally, Section~\ref{sec:conclusion} concludes with remarks and directions for future work.

\section{Structure Theorem}
\label{sec:structure}

To approximate $\E[\tau]$, we introduce a \emph{truncated expectation} obtained by imposing a cutoff at level $N$. In the notation of the previous section, this corresponds to $E_{N,0}[\tau_s]$, which is computed via backward induction~\eqref{Main} using the boundary conditions $E_{N,0}[\tau_s] = 0$ for all $s > N$.
For the convenience of the reader, we recall the general definition of $E_{N,B}[\tau_s]$ explicitly:
\begin{equation} \label{CutoffB}
  E_{N,B}[\tau_{s}] = 
  \begin{cases}
    B, & \text{if } s > N, \\
    0, & \text{if $s \leq N$ and $s \in A$}, \\
    1+\dfrac{1}{6}\sum_{i=1}^6 E_{N,B}[\tau_{s+i}], & \text{otherwise}. 
  \end{cases}
\end{equation}

The difference between $\E[\tau]$ 
and $E_{N,0}[\tau]$ is related to the \emph{survival probability}~$\SP_N(0)$.

\begin{defi}[Survival Probability]
Let $\SP_N(s)$ be the probability that 
the sum of the numbers rolled, starting at~$s$,
survives beyond~$N$, i.e., that the sum never hits an element of~$A$ until it is greater than~$N$. It can be calculated exactly by backward induction:
\begin{equation} \label{Main2}
\SP_N(s)  = 
\begin{cases}
1, & \text{if } s > N, \\
0, & \text{if } s \leq N \text{ and } s \in A,  \\
\dfrac{1}{6}\sum_{i=1}^6 \SP_N(s+i) , & \text{otherwise}. 
\end{cases}
\end{equation}
\end{defi}

When $A$ is the set of primes and $N=72\,000$, we obtained $\SP_N(0) \approx 3.23565235\cdot10^{-1032}$. 
This quantity is extremely small and decays rapidly as a function of the number of primes up to $N$; see Proposition~\ref{am} for a quantitative bound.

We will see that the high precision of the expectation stems from this extremely small probability. The cutoff $N$ is chosen to balance computational feasibility with numerical precision. In our implementation, $1000$ digits provide a reasonable compromise, as all calculations complete within a few minutes, even when we execute them with exact rational number arithmetic. In contrast, if all calculations are performed with high-precision floating point numbers, then the execution time reduces to about one second, but it would require a separate analysis of potential accumulation of rounding errors.

\begin{thm}[Structure Theorem] \label{LU}
For $N\in\N$ and any constant $B\in\R$ we have
\[
  E_{N,B}[\tau] =  E_{N,0}[\tau] + B\cdot\SP_N(0).
\]
\end{thm}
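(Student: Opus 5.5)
We will prove the stronger, pointwise identity
\[
  E_{N,B}[\tau_s] = E_{N,0}[\tau_s] + B\cdot \SP_N(s) \qquad \text{for all } s\in\N \text{ (and all } s>N\text{)},
\]
and then specialize to $s=0$. The approach is a backward (downward) induction on $s$, starting from the boundary region $s>N$ and descending to $s=0$. Both recursions \eqref{CutoffB} and \eqref{Main2} are defined by exactly this kind of backward induction, so they determine the values uniquely in the same order.

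\emph{Base case, $s>N$.} By \eqref{CutoffB} we have $E_{N,B}[\tau_s]=B$ and $E_{N,0}[\tau_s]=0$, and by \eqref{Main2} we have $\SP_N(s)=1$. Hence $B = 0 + B\cdot 1$, so the identity holds for all $s>N$, in particular for $s=N+1,\dots,N+6$.

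\emph{Inductive step, $s\le N$, assuming the identity for $s+1,\dots,s+6$.} There are two cases.
\begin{enumerate}[(i)]
\item If $s\in A$, all three quantities $E_{N,B}[\tau_s]$, $E_{N,0}[\tau_s]$ and $\SP_N(s)$ vanish, so the identity reads $0=0+B\cdot 0$.
\item If $s\notin A$, we use the recursion and the induction hypothesis:
\[
  E_{N,B}[\tau_s] = 1+\tfrac16\sum_{i=1}^6\bigl(E_{N,0}[\tau_{s+i}]+B\,\SP_N(s+i)\bigr).
\]
Splitting the sum by linearity gives
\[
  \Bigl(1+\tfrac16\sum_{i=1}^6 E_{N,0}[\tau_{s+i}]\Bigr) + B\cdot\tfrac16\sum_{i=1}^6\SP_N(s+i) = E_{N,0}[\tau_s] + B\,\SP_N(s).
\]
\end{enumerate}
The key point in case (ii) is that the inhomogeneous term $1$ appears only once, on the $E_{N,0}$ side. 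The $B$-part therefore satisfies the homogeneous recursion \eqref{Main2} with boundary value $1$, scaled by $B$.

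I do not expect a genuine obstacle here; the argument is just linearity of the recursion. The only points that need care are the following.
\begin{enumerate}[(a)]
\item The induction must be formulated over $s$ from $N+6$ down to $0$, so that for every $s\le N$ all six successors $s+1,\dots,s+6$ are already covered. Successors exceeding $N$ lie in the base-case region.
\item The values at $s>N$ are boundary values even when $s\in A$. In \eqref{CutoffB} and \eqref{Main2} the condition $s>N$ is checked first, so the base case is consistent.
\end{enumerate}
Setting $s=0$ then yields the Structure Theorem.
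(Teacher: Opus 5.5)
Your proof is correct and follows essentially the same route as the paper. Both arguments establish the pointwise identity $E_{N,B}[\tau_s]=E_{N,0}[\tau_s]+B\cdot\SP_N(s)$ from linearity of the recursion (the paper calls this superposition of the particular solution $E_{N,0}$ and $B$ times the homogeneous solution $\SP_N$) and then set $s=0$; your downward induction just spells out the uniqueness step and the case $s\in A$, which the paper's superposition argument leaves implicit.
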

\begin{proof}
First observe that the sequences $\bigl(E_{N,B}[\tau_s]\bigr){}_{s\geq0}$ and $\bigl(E_{N,0}[\tau_s]\bigr){}_{s\geq0}$ are defined by the very same linear inhomogeneous recurrence~\eqref{CutoffB}. 
Only the boundary conditions, given at the indices $s=N+1,\dots,N+6$ differ. Second note that the sequence $\SP_N(s)$ is a solution of the associated homogeneous recurrence, with boundary conditions $\SP_N(s)=1$ at $s=N+1,\dots,N+6$. Hence these three sequences have the following form:
\begin{alignat*}{9}
E_{N,B}[\tau_s] &= (E_{N,B}[\tau_0], \dots, &\ & E_{N,B}[\tau_{N-1}], &\ \,& E_{N,B}[\tau_N],
  &\ & B, &\ & B, &\ & B, &\ & B, &\ & B, &\ & B),  \\
E_{N,0}[\tau_s] &= (E_{N,0}[\tau_0], \dots, &\ & E_{N,0}[\tau_{N-1}], && E_{N,0}[\tau_N],
  && 0, && 0, && 0, && 0, && 0, && 0),  \\
\SP_N(s) &= (\SP_N(0), \dots, &\ & \SP_N(N-1), && \SP_N(N),
  && 1, && 1, && 1, && 1, && 1, && 1).
\end{alignat*}

By the superposition principle, the solution $E_{N,B}[\tau_s]$ can be written as the sum of the particular solution $E_{N,0}[\tau_s]$ plus a scalar multiple of the homogeneous solution~$\SP_N(s)$. The corresponding scalar~$\alpha$ equals~$B$ and it is uniquely determined by the boundary conditions: $B=0+\alpha\cdot1$. It follows that for all $0\leq s\leq N+6$ we have
\[
  E_{N,B}[\tau_s] =  E_{N,0}[\tau_s] + B\cdot\SP_N(s),
\]
and the assertion is recovered by specializing $s=0$.
\end{proof}

Combining Theorem~\ref{LU} with the squeeze theorem~\eqref{squeeze}, 
we obtain the following corollary.
\begin{cor} \label{Cor}
Given constants $L$ and $U$ such 
that $0 \leq L< \E[\tau_{N+i}] < U$ for $1 \leq i \leq 6$, then
\[
  E_{N,0}[\tau] + L\cdot\SP_N(0) \;<\; \E[\tau] 
  \;<\; E_{N,0}[\tau] + U\cdot\SP_N(0).
\]
In other words, $\E[\tau]$ can be approximated by $E_{N,0}[\tau]$ 
with error at most $\SP_N(0)\, (U-L)$.
\end{cor}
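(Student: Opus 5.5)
The plan is to combine the squeeze theorem~\eqref{squeeze} with the Structure Theorem~\ref{LU}. The only real work is making sure the squeeze inequality holds strictly under the stated hypotheses. To that end, I would prove the squeeze theorem carefully rather than just quote it. The tool is a monotonicity (comparison) argument for the backward recursion~\eqref{CutoffB}.

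First I would establish an exact identity. For every $s\le N+6$, the true expectations satisfy the same recursion as $E_{N,B}[\tau_s]$ for $s\le N$. The only difference is at the boundary: at $s=N+i$, $1\le i\le 6$, the boundary value $B$ is replaced by the true value $\E[\tau_{N+i}]$. This holds because the recursion~\eqref{Main} is valid for all $s$; one needs $\E[\tau_s]<\infty$, but the hypothesis $\E[\tau_{N+i}]<U$ already gives finiteness at the boundary, and finiteness then propagates backward. The difference $D_s:=\E[\tau_s]-E_{N,B}[\tau_s]$ then solves the homogeneous recursion: $D_s=0$ for $s\le N$ with $s\in A$, $D_s=\frac16\sum_{i=1}^6 D_{s+i}$ for other $s\le N$, and $D_{N+i}=\E[\tau_{N+i}]-B$. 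Unrolling this recursion expresses $D_0$ as a weighted average of the boundary differences,
\[
D_0=\sum_{i=1}^6 q_i\,\bigl(\E[\tau_{N+i}]-B\bigr).
\]
Here $q_i\ge0$ is the probability that the walk started at $0$ avoids $A$ up to level $N$ and first exceeds $N$ at exactly $N+i$. In particular, $\sum_i q_i=\SP_N(0)$ by definition~\eqref{Main2}. This representation is the same superposition idea used in the proof of Theorem~\ref{LU}, applied with six separate indicator boundary conditions instead of one constant.

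Now take $B=L$: every factor $\E[\tau_{N+i}]-L$ is positive, so $D_0\ge0$. Taking $B=U$ makes every factor negative, so $D_0\le0$. Strictness requires $\SP_N(0)>0$, i.e.\ some path from $0$ reaches beyond $N$ while avoiding $A$. If $\SP_N(0)=0$, both sides collapse to equality. So I would state the corollary with the tacit assumption $\SP_N(0)>0$, which holds for primes since the computed value is about $3.2\cdot10^{-1032}$, or else weaken it to non-strict inequalities. Finally, substituting $E_{N,L}[\tau]=E_{N,0}[\tau]+L\,\SP_N(0)$ and $E_{N,U}[\tau]=E_{N,0}[\tau]+U\,\SP_N(0)$ from Theorem~\ref{LU} gives the two displayed inequalities. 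Subtracting the lower bound from the upper bound gives the error bound $\SP_N(0)(U-L)$.

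The main obstacle is the justification that $\E[\tau_s]$ itself satisfies~\eqref{Main} with finite values, together with the strictness issue. Everything else is the linear-algebra bookkeeping already carried out for Theorem~\ref{LU}.
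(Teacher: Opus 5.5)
Your proposal is correct and takes the same route as the paper, whose proof consists only of combining the squeeze inequality~\eqref{squeeze} with Theorem~\ref{LU}. You go slightly further by proving the squeeze inequality itself, which the paper only asserts in the introduction, via the decomposition $D_0=\sum_{i=1}^6 q_i\bigl(\E[\tau_{N+i}]-B\bigr)$ with $q_i\ge 0$ and $\sum_i q_i=\SP_N(0)$. Your remark that the strict inequalities need $\SP_N(0)>0$ is also correct and exposes a tacit assumption in the statement, one that is harmless for the primes.
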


The probability $\SP_N(0)$ decays exponentially fast according to the number of elements 
in~$A$ that are less than or equal to~$N$. In our example, since the prime numbers are relatively dense in~$\N$, the probability to survive all the primes up to $N=72\,000$ is extremely small. 
Even if the bound~$U$ of $\E[\tau_N]$ is quite rough, one can still get a very good approximation to~$\E[\tau]$.

\begin{figure}[h]
\begin{center}
    \includegraphics[width=0.75\textwidth]{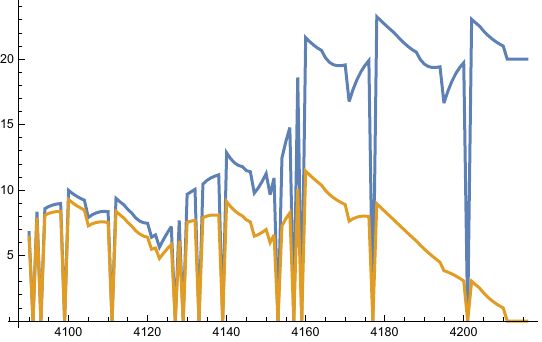}
    \caption{For $N=4\,210$ and $U=20$, the functions $E_{N,U}[\tau_s]$ (blue) and $E_{N,0}[\tau_s]$ (orange)
    are depicted in the range $4\,090\leq s\leq N+6$. One can clearly observe the exponential rate at which the two functions approach each other as $s$ moves away from $N$. The horizontal segment at the right end corresponds to the imposed boundary conditions for $N+1\leq s\leq N+6$.
    }
\end{center}
\end{figure}


\section{Rigorous Upper Bound \texorpdfstring{$U$}{U} for Primes}\label{sec:bounds}

Recall from Corollary~\ref{Cor} that, since we take $L=0$, it suffices to establish an upper bound for the boundary values $\mathbb{E}[\tau_{N+i}]$, $1\le i\le 6$, in order to obtain a rigorous enclosure for $\mathbb{E}[\tau]$.
In this section, we prove that one may take $U=412$ when $N=72\,000$ and $A$ is the set of prime numbers, thereby completing the rigorous justification of the \emph{backward induction} approach.

We begin by introducing the \emph{landing probability} $\LP_s(n)$.

\begin{defi}[Landing Probability]
Let $\LP_s(n)$ denote the probability that the cumulative sum, starting at $s$,
reaches $n$ without ever hitting a prime number (note that $n$ itself may or may not be prime).
\end{defi}

The values $\LP_s(n)$ are computed by forward recursion, which is also used in the induction argument of Proposition~\ref{am} and in the evaluation of the upper bound in Theorem~\ref{RUB}.
We fix the starting point $s$, for which $\LP_s(s)=1$, corresponding to zero rolls. For $n > s$, the values are computed recursively as follows:

\[
  \LP_s(n)  = \dfrac{1}{6}\sum_{i=1}^6 r_s(n-i) ,
\]
where $r_s(n)$ is defined recursively via a forward calculation by
\begin{equation} \label{Land2}
r_s(n)  = 
\begin{cases}
1, & \text{if } n = s, \\
0, & \text{if } \text{$n < s$ or $n$ is a prime greater than~$s$}, \\
\dfrac{1}{6}\sum_{i=1}^6 r_s(n-i) , & \text{otherwise}. 
\end{cases}
\end{equation}
The auxiliary quantity $r_s(n)$ accounts for the zero probability to continue the
process from a prime position, while $\LP_s(n)$ itself is the (non-zero) probability
to land on the position~$n$, regardless of whether $n$ is prime or not.
The first few values of $\LP_0(n)$ are given as follows (see also Figure~\ref{fig:LP}):
\[
  \begin{array}{r|cccccccccc}
  n & 0 & 1 & 2 & 3 & 4 & 5 & 6 & 7 & 8 & 9 \\ \hline
  \rule{0pt}{20pt}\LP_0(n) & 1 & \dfrac16 & \dfrac7{36} & \dfrac7{36} & \dfrac7{36} & \dfrac{49}{216}
  & \dfrac{49}{216} & \dfrac{127}{1296} & \dfrac{91}{1296} & \dfrac{637}{7776}
  \end{array}
\]
For example, there are three ways to reach~$7$ in two rolls ($7=1+6=4+3=6+1$), three ways in three rolls ($7=1+3+3=1+5+1=4+2+1$), and one way in four rolls ($7=1+3+2+1$). Hence, the probability of reaching~$7$ is
\[
  \frac{3}{36}+\frac{3}{216}+\frac{1}{1296}=\frac{127}{1296}.
\]

\begin{figure}
\begin{center}
    \includegraphics[width=0.75\textwidth]{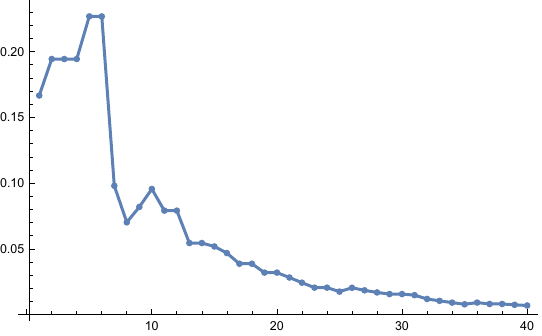}
    \caption{Plot of the first $40$ values of the landing probability~$\LP_0(n)$ for $1\leq n\leq40$.}
    \label{fig:LP}
\end{center}
\end{figure}

\begin{defi}
For integers $s$ and $n$, let $\pi(s,n)$ denote the number of primes $p$ satisfying
\[
s<p\le n.
\]
In particular, $\pi(s,n)=0$ when $n\le s$.
\end{defi}

The following proposition is inspired by the approach of Alon and Malinovsky~\cite{AM}.
\begin{prop} \label{am} 
For a fixed non-negative integer s, we have
\[
  \LP_s(n) \leq \left( \dfrac{5}{6}\right)^{\pi(s,n-1)}.
\]
\end{prop}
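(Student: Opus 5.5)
We prove the bound by strong induction on $n$ with $s$ fixed. Write $k=\pi(s,n-1)$ and $q=5/6$.

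\emph{Base cases.} For $n\le s+1$ we have $\pi(s,n-1)=0$. Also $\LP_s(n)\le 1$ because it is a probability, so the claim holds.

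\emph{Auxiliary function.} Together with $\LP_s$, we control the quantity $r_s(m)$ from \eqref{Land2}. Since $r_s(m)\le \LP_s(m)$, and $r_s(m)=0$ whenever $m>s$ is prime, the factor $5/6$ will come from the recursion $\LP_s(n)=\frac16\sum_{i=1}^6 r_s(n-i)$: one of the six terms is forced to vanish at a prime, and the others are controlled by the induction hypothesis.

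\emph{Inductive step.} Let $n\ge s+2$ with $k\ge1$, and let $p$ be the largest prime with $s<p\le n-1$. We distinguish two cases.

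\begin{enumerate}[(i)]
\item \emph{Case $p\ge n-6$.} Then $p=n-j$ for some $1\le j\le 6$, and the term $r_s(n-j)$ vanishes. For each $i\ne j$ we bound $r_s(n-i)\le \LP_s(n-i)\le q^{\pi(s,n-i-1)}$. Since $p$ is the largest prime below $n$, every $i\neq j$ gives $\pi(s,n-i-1)\ge k-1$: if $i<j$, the interval $(s,n-i-1]$ contains $p$, and if $i>j$, it misses at most the prime $p$ itself. Hence
\[
\LP_s(n)\le \tfrac16\cdot 5\cdot q^{k-1}=q^k .
\]
\item \emph{Case $p<n-6$.} Then $n-6,\dots,n-1$ are all composite. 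We use the induction hypothesis in the form $\LP_s(m)\le q^{\pi(s,m-1)}$ together with $\pi(s,n-i-1)$. Note that $\pi(s,n-i-1)=k$ for $i\le 6$ exactly when no prime lies in $[n-i,n-1]$, which holds here. So each term is at most $q^k$, and averaging gives $\LP_s(n)\le q^k$.
\end{enumerate}

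\emph{Main obstacle.} The delicate point is case (i). For $i<j$, the bound $q^{\pi(s,n-i-1)}$ only uses primes up to $n-i-1$. But there may be several primes among $n-6,\dots,n-1$, for example $n-1$ and $n-5$ in different residue patterns. We therefore need $\pi(s,n-i-1)\ge k-1$ for every $i\ne j$, and this must be checked carefully.

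When $p=n-j$ is the largest prime, any $i<j$ has $n-i-1\ge p$, so the count is exactly $k$. Any $i>j$ loses only primes in $(n-i-1,n-1]$, namely $p$ and possibly smaller primes lying in that window. This is the real issue: if two primes lie in the window, the count could drop to $k-2$.

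To handle it, I would strengthen the induction to track $r_s$ directly. The claim to prove is
\[
r_s(m)\le q^{\pi(s,m)}\quad\text{for all } m\ge s \text{ that are not primes greater than } s,
\]
and $r_s(m)=0$ otherwise. Then $\LP_s(n)=\frac16\sum_i r_s(n-i)$. The nonzero terms are indexed by composite $n-i$. For those, the prime $p$, if $p\ge n-i$, is excluded, but in that case the term has $\pi(s,n-i)\ge k-(\text{primes in }[n-i,n-1])$.

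Failing a clean per-term count, the fallback is a coupling or conditioning argument. Consider the first time the walk enters the window $(p-6,p]$ after the previous prime. From any position in $[p-6,p-1]$, the walk hits $p$ exactly with probability at least $1/6$ on the next roll. Conditioning successively on the last visit before each prime gives a multiplicative factor of at most $5/6$ per prime, as in Alon and Malinovsky. The distinct primes are each jumped over at separate times, which yields $q^{k}$.
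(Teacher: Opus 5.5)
Your proof has a genuine gap, and it is the one you flag yourself. Case (i) only works when the window $\{n-6,\dots,n-1\}$ contains a single prime exceeding $s$. If it contains $m\ge 2$ such primes, the exponent $\pi(s,n-i-1)$ for the indices $i$ beyond the second prime drops to $k-2$ or lower. A concrete instance is $s=0$, $n=14$, with the twin primes $11,13$ in the window: $\pi(0,13)=6$ but $\pi(0,9)=4$. So the estimate $\frac16\cdot 5\cdot(5/6)^{k-1}$ is not justified. Neither remedy you offer closes this:
\begin{itemize}
\item The \emph{strengthened} hypothesis $r_s(m)\le(5/6)^{\pi(s,m)}$ at non-primes $m$ is literally the original statement there, since $\pi(s,m)=\pi(s,m-1)$ and $r_s(m)=\LP_s(m)$ for such $m$. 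The sentence after it stops before producing any bound.
\item The conditioning fallback rests on the claim that distinct primes are jumped over at separate times. This is false for primes at distance at most $5$, since a single roll can skip both.
\item Conditioning on \emph{the last visit before $p$}, which is not a stopping time, needs more care than indicated.
\end{itemize}

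The missing idea is to treat all primes in the window at once and pay for them with an elementary inequality. Let $m$ be the number of primes in $(s,n-1]\cap\{n-6,\dots,n-1\}$.
\begin{itemize}
\item Then $m$ of the six terms $r_s(n-i)$ vanish.
\item Every remaining term satisfies $r_s(n-i)\le\LP_s(n-i)\le(5/6)^{\pi(s,n-i-1)}\le(5/6)^{\pi(s,n-7)}=(5/6)^{k-m}$, because $n-i-1\ge n-7$.
\item Hence $\LP_s(n)\le\frac{6-m}{6}(5/6)^{k-m}\le(5/6)^{k}$, since $1-\frac m6\le(1-\frac16)^m$ by Bernoulli's inequality for $0\le m\le 6$.
\end{itemize}
This is the paper's argument. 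It contains your case (ii) ($m=0$) and the valid part of case (i) ($m=1$) as special cases.

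The same inequality would also repair your probabilistic route. Use the stopping times $\sigma_j=\min\{t: S_t\ge p_j-6\}$. Primes whose $\sigma_j$ coincide must all be escaped by one uniform roll, which happens with conditional probability $\frac{6-m}{6}\le(5/6)^m$. For distinct $\sigma_j$, the conditional factors $5/6$ multiply.
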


\begin{proof}
We proceed by induction on $n$.
For the base cases, we have $\LP_s(n)=0$ for $n<s$, and
$\LP_s(s)=1$ by definition. 
Since $\pi(s,s-1)=0$, it follows that
$\LP_s(s)=1=\left(\tfrac56\right)^{\pi(s,s-1)}$.
Hence the asserted inequality holds for all $n\le s$.

Now fix $n>s$ and assume that the statement holds for all integers less than $n$. 
Let $q$ denote the number of primes in
\[
(s,n-1]\cap\{n-6,n-5,\ldots,n-1\}.
\]
The intersection with $(s,n-1]$ is needed because $\pi(s,n-1)$ counts only primes $p$ satisfying $s<p\le n-1$. Then $0\le q\le 6$, and
\[
\pi(s,n-7)+q
=
\pi(s,n-1).
\]

Since $n-i-1\ge n-7$ for all $i\in\{1,\ldots,6\}$, we have
\[
\pi(s,n-i-1)
\ge
\pi(s,n-7)
=
\pi(s,n-1)-q.
\]

By the induction hypothesis,
\begin{equation}
\label{CommonBoundR}
\LP_s(n-i)
\le
\left(\frac56\right)^{\pi(s,n-i-1)}
\le
\left(\frac56\right)^{\pi(s,n-1)-q}.
\end{equation}

Recall that $\LP_s(n)=\frac16\sum_{i=1}^6 r_s(n-i)$.
Now $r_s(n-i)=0$ whenever $n-i$ is prime, and otherwise
$r_s(n-i)=\LP_s(n-i)$. Therefore, at most $6-q$ terms in the above sum are nonzero. Using~\eqref{CommonBoundR}, we obtain
\(
\LP_s(n)
\le
\frac16(6-q)
\left(\frac56\right)^{\pi(s,n-1)-q}
\le
\left(\frac56\right)^{\pi(s,n-1)},
\)
since
$\frac16(6-q)\left(\frac56\right)^{-q}\le 1$  for $0\le q\le 6$.
\end{proof}

\begin{thm}[Upper bound] \label{RUB}
The expectation of $\tau_N$ is bounded from above as
\begin{equation}\label{UpperBound}
  \E[\tau_{N}] \leq \sum_{p \geq N}(p-N)\cdot \left( \dfrac{5}{6}\right)^{\pi(N,p-1)},
\end{equation}
where the sum runs over all prime numbers~$p\geq N$.
\end{thm}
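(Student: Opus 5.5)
The bound follows from the observation that the number of rolls is at most the total distance travelled, so $\tau_N$ can be bounded by the overshoot $S-N$ of the hitting position and then summed prime by prime. Since each roll increases the sum by at least $1$, starting from $N$ we have $\tau_N \le S_{\tau_N}^{(N)} - N$. On the event $\tau_N<\infty$, $S_{\tau_N}^{(N)}$ is the first prime $p$ hit by the walk, with $p \ge N$ (the case $p=N$ when $N$ itself is prime, giving $\tau_N=0$, consistent with the summand $p-N=0$). We therefore decompose according to which prime is hit first:
\[
\E[\tau_N] \le \sum_{p\ge N} (p-N)\,\mathbb{P}(\text{the walk from } N \text{ first hits a prime at } p).
\]
This also requires $\tau_N<\infty$ almost surely. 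That follows because the probability of surviving past $n$ is at most $\sum_{m=n+1}^{n+6}\LP_N(m)$, which tends to $0$ by Proposition~\ref{am} since there are infinitely many primes.

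Next I bound the probability of first hitting at $p$ by the landing probability. The event that the walk first hits a prime at $p$ is contained in the event that it reaches $p$ without hitting any prime before, which has probability exactly $\LP_N(p)$. Strictly, the event is that the walk reaches $p$ without hitting a prime in $(N,p-1]$, and $\LP_N$ as defined via $r_N$ zeros out precisely the primes greater than $N$. If $N$ is prime we have $\tau_N=0$ and there is nothing to prove. Otherwise the two events coincide, so the probability equals $\LP_N(p)$.

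Then Proposition~\ref{am} with $s=N$ and $n=p$ gives $\LP_N(p)\le (5/6)^{\pi(N,p-1)}$. Substituting yields~\eqref{UpperBound}.

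The main obstacle is the bookkeeping for the first inequality. It rests on the pathwise bound $\tau_N\le S_{\tau_N}^{(N)}-N$ and on the events $\{\text{first prime hit}=p\}$ being disjoint and exhausting the sample space up to a null set, so that the expectation splits as a sum; all terms are nonnegative, so this is an application of monotone convergence. Convergence of the right-hand side is not needed for the inequality itself, though it does hold since $\pi(N,p-1)$ grows roughly like $p/\log p$.
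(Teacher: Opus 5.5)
Your proposal is correct and follows the same route as the paper's proof. Both decompose $\E[\tau_N]$ according to the first prime hit, bound $\tau_N\le p-N$ on that event because each roll advances the sum by at least one, identify the hitting probability with $\LP_N(p)$, and finish with Proposition~\ref{am}. Your additional checks — that $\tau_N<\infty$ almost surely, and the trivial case where $N$ itself is prime — fill in details the paper's short argument leaves implicit.
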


\begin{proof}
By the law of total expectation,
\begin{equation}\label{UpperBound1}
  \E[\tau_{N}] = \sum_{p \geq N} 
  \E[\tau_{N} \mid \text{stop at }p]\cdot \LP_N(p)
  \leq \sum_{p \geq N}(p-N)\cdot \left( \dfrac{5}{6}\right)^{\pi(N,p-1)},
\end{equation}
where the sum runs over all primes $p\ge N$.

This is because $\LP_N(p)$ is precisely the probability that the process stops at $p$, and since each roll increases the cumulative sum by at least one before the process reaches $p$, we have
\[
\E[\tau_N\mid \text{stop at }p]\le p-N.
\]
Finally, Proposition~\ref{am} gives the final inequality.
\end{proof}

\begin{prop}[Quantitative prime number theorem, Corollary~1 in~\cite{RS}]
\label{PNT}
Let $\pi(x)$ denote the number of primes less than or equal to $x$. Then
\begin{alignat*}{2}
  \pi(x) &> \dfrac{x}{\ln{x}} &\qquad& (x \geq 17), \\
  \pi(x) &< 1.25506\dfrac{x}{\ln{x}} &\qquad& (x > 1). 
\end{alignat*}
\end{prop}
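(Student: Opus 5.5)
Since this proposition is quoted from Rosser and Schoenfeld~\cite{RS}, in the paper it suffices to cite it. If I had to prove it myself, I would follow their route: get explicit bounds on the Chebyshev function $\theta(x)=\sum_{p\le x}\ln p$, and transfer them to $\pi(x)$ by partial summation. The transfer is the identity
\[
\pi(x)=\frac{\theta(x)}{\ln x}+\int_2^x\frac{\theta(t)}{t\ln^2 t}\,dt ,
\]
so any two-sided estimate $\theta(x)=x+O(\text{explicit})$ becomes an explicit estimate for $\pi(x)$ in terms of $x/\ln x$.

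\emph{Analytic part (large $x$).} I would bound $\psi(x)-x$ explicitly using three inputs:
\begin{itemize}
\item the explicit formula for $\psi$;
\item a numerically verified statement that the Riemann Hypothesis holds up to some height $T_0$;
\item an explicit zero-free region $\sigma>1-1/(R\ln|t|)$ beyond that height.
\end{itemize}
Together these give, for example, $|\theta(x)-x|<x/(2\ln x)$ for all $x$ beyond a moderate threshold $x_0$, using $\psi(x)-\theta(x)=O(\sqrt{x})$. Substituting into the identity above gives
\[
\pi(x)>\frac{x}{\ln x}\Bigl(1+\frac{1}{2\ln x}\Bigr) \quad\text{and}\quad \pi(x)<\frac{x}{\ln x}\Bigl(1+\frac{3}{2\ln x}\Bigr)
\]
for $x\ge x_0$. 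The first already implies $\pi(x)>x/\ln x$. The second implies $\pi(x)<1.25506\,x/\ln x$ once $\frac{3}{2\ln x}<0.25506$, i.e.\ for $x$ beyond a few hundred, and in any case for $x\ge x_0$.

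\emph{Finite part (small $x$).} For $x<x_0$ I would verify both inequalities by direct computation. Since $\pi$ is a step function and $x/\ln x$ is increasing for $x>e$, it is enough to check at the primes and just to the left of each prime. The upper bound is then a maximization of $\pi(x)\ln x/x$ over this finite range. The maximum is attained at $x=113$, where $30\ln 113/113\approx 1.25505$; this is exactly why the constant $1.25506$ appears and why it cannot be improved. The lower bound fails for some $x<17$ (e.g.\ just below $17$, where $\pi=6$ and $x/\ln x>6$), which explains the hypothesis $x\ge 17$.

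\emph{Main obstacle.} The hard step is the analytic one: making every constant in the explicit formula and the zero-density/zero-free-region estimates fully explicit, and driving $x_0$ low enough that the remaining range is computationally checkable. For that I would rely on the numerical zero verification and the sharpened $\theta$ bounds of~\cite{RS} rather than re-deriving them.
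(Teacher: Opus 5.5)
Your main point matches the paper exactly. The paper gives no proof of this proposition; it simply quotes Corollary~1 of Rosser and Schoenfeld. Your outline of their route is sound in structure: explicit $\theta$-bounds, partial summation, and a finite check whose extremal case $x=113$ produces the constant. The value there is $30\ln 113/113=1.2550587\ldots$, which rounds up to $1.25506$.

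One step of the sketch does not hold as written. Plugging $|\theta(t)-t|<t/(2\ln t)$ into the partial-summation identity does give the lower bound for large $x$. It does not give $\pi(x)<\frac{x}{\ln x}\bigl(1+\frac{3}{2\ln x}\bigr)$, because
\[
\int_2^x\frac{dt}{\ln^2 t}=\frac{x}{\ln^2 x}+2\int_2^x\frac{dt}{\ln^3 t}+O(1),
\]
which leaves an extra positive term $\frac52\int_2^x\frac{dt}{\ln^3 t}\sim\frac{5x}{2\ln^3 x}$. Getting that sharper inequality requires a better $\theta$-error than $t/(2\ln t)$. Rosser and Schoenfeld use much sharper large-$x$ estimates together with computation.

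For this proposition the slip is harmless. The weaker bound $\frac{x}{\ln x}\bigl(1+\frac{3}{2\ln x}+O(\ln^{-2}x)\bigr)$ is still below $1.25506\,\frac{x}{\ln x}$ for all large $x$. So only the threshold where the finite verification takes over moves.
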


\bigskip
We now apply Proposition~\ref{PNT} to estimate the upper bound obtained in Theorem~\ref{RUB}. The idea is to evaluate the main finite portion of the sum directly, while controlling the remaining tail using the quantitative estimates for the prime counting function.

To estimate the upper bound in~\eqref{UpperBound}, we split the infinite sum into two portions. The first portion, consisting of all primes $p$ with $N \leq p \leq 100\,003$, is obtained by direct computer calculation (using Maple or Mathematica):
\[
  \sum_{N \leq p \leq 100003}(p-N)\cdot \left( \dfrac{5}{6}\right)^{\pi(N,p-1)} 
  = 411.71590479...\ .
\]
For the sake of simplicity, we use the bound~\eqref{UpperBound1} for the summands in this first portion of the sum, although this is not strictly necessary. Indeed, for the finite range $N \leq p \leq 100\,003$, the exact values of $\LP_N(p)$ can be computed in reasonable time. Hence the bound on the first portion of the sum could be improved further, but such refinement is unnecessary, since the crude bound above is already more than sufficient for our purposes.

The second portion is bounded using the integral
\begin{align*}
  \sum_{p > 100003} (p-N) \left( \dfrac{5}{6}\right)^{\!\pi(N,p-1)}
  &< \int_{100003}^{\infty} (x-N)\left( \dfrac{5}{6}\right)^{\!(x-1)/\ln{(x-1)}-1.25506N/\ln{N}} \mathrm{d}x \\
  &\approx 1.817007038 \cdot 10^{-42},
\end{align*}
where the estimate for $\pi(N,x-1)=\pi(x-1)-\pi(N)$ follows from Proposition~\ref{PNT}. The integral was evaluated using computer algebra software.

Combining the estimates for the two portions of the sum and applying Theorem~\ref{RUB}, we obtain
\[
\E[\tau_N]
\leq
\sum_{p \geq N}(p-N)\left( \dfrac{5}{6}\right)^{\pi(N,p-1)}
<
412.
\]

The integral bound is deliberately crude. The sum in~\eqref{UpperBound} runs only over prime values of $p$, whereas the integral includes all real values of $x$ in the corresponding range. Thus the integral overestimates the tail, but this is harmless because it is applied only after the landing probability has already become extremely small. This is why we split the calculation into a directly computed finite part and an analytically bounded tail.

It remains to check that the same upper bound applies to the six boundary values required in Corollary~\ref{Cor}. By the choice of $N=72\,000$, none of $N,N+1,\dots,N+6$ is prime.
Hence, for each $0\le j\le 6$, the contributing primes in the corresponding upper bounds and the prime-counting exponents are unchanged. Only the factor $p-(N+j)$ changes. Therefore, by Theorem~\ref{RUB}, if we define
\[
  U_N := \sum_{p \geq N}(p-N)\cdot \left( \dfrac{5}{6}\right)^{\pi(N,p-1)},
\]
then $U_N>U_{N+1}>\dots>U_{N+6}$, and 
\[
  412 > U_N > \max_{1\leq i\leq 6} U_{N+i} \geq \max_{1\leq i\leq 6}\E[\tau_{N+i}].
\]
Thus $U=412$ is a valid common upper bound for all boundary values needed in Corollary~\ref{Cor}.

The behavior of $\E[\tau_s]$ near $s=N$ is illustrated in Figure~\ref{fig:E_tau_72000}. The figure also shows that the bound $U=412$ is highly conservative, since
\(\max_{1\le i\le 6}\E[\tau_{N+i}]<13\).
Note also the monotonically decreasing behavior of $\E[\tau_s]$ for $s\in\{p_1+1,\dots,p_2-6\}$ where $p_1$ and $p_2$ are consecutive primes. Indeed, for all such starting points~$s$, the set of possible stopping positions is the same, but a larger starting value leaves a shorter remaining distance to the next prime. In contrast, $\E[\tau_s]$ increases when $s$ approaches a prime number: the closer the starting position is to that prime, the larger the probability of jumping over it
 by a single roll, and the smaller the probability of terminating early by landing on it.

\begin{figure}
\begin{center}
  \includegraphics[width=0.75\textwidth]{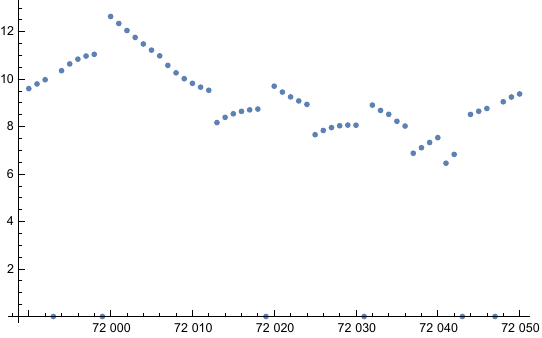}
    \caption{The values of $\E[\tau_s]$ for $71\,990\leq s\leq 72\,050$. Note that these values, like $\E[\tau]$, cannot be computed exactly, but they can be approximated very accurately by $E_{N,0}[\tau_s]$. For this plot, we used $N=100\,000$.}
  \label{fig:E_tau_72000}
\end{center}
\end{figure}


\section{Higher Moments with High Accuracy}
\label{sec:moments}
In this section, we demonstrate that our algorithm can
be generalized to compute higher moments. Again taking $A$ to be the set of primes,
we show some high-precision approximations of the variance, skewness, and kurtosis of~$\tau$, i.e., $\E[(\tau -\mu)^k]$ for $k = 2,3,4$, where $\mu=\E[\tau]$.

The \emph{raw $k$-moment} 
of $\tau_s$ satisfies the following recurrence
\begin{equation} \label{Main3}
\E[\tau_{s}^k]  = 
\begin{cases}
0, & \text{if } s \text{ is a prime}, \\
\dfrac{1}{6}\sum_{i=1}^6 \E[(\tau_{s+i}+1)^k], 
& \text{if } s \text{ is not a prime}.
\end{cases}
\end{equation}

That is, for a non-prime $s$ we have the following relations:
\begin{align*}
\E[\tau_{s}^2]  &= 1+\dfrac{1}{3}\sum_{i=1}^6\E[\tau_{s+i}]
+\dfrac{1}{6}\sum_{i=1}^6\E[\tau_{s+i}^2], \\
\E[\tau_{s}^3]  &= 1+\dfrac{1}{2}\sum_{i=1}^6\E[\tau_{s+i}]
+\dfrac{1}{2}\sum_{i=1}^6\E[\tau_{s+i}^2]
+\dfrac{1}{6}\sum_{i=1}^6\E[\tau_{s+i}^3], \\
\E[\tau_{s}^4]  &= 1+\dfrac{2}{3}\sum_{i=1}^6\E[\tau_{s+i}]
+\sum_{i=1}^6\E[\tau_{s+i}^2]
+\dfrac{2}{3}\sum_{i=1}^6\E[\tau_{s+i}^3]
+\dfrac{1}{6}\sum_{i=1}^6\E[\tau_{s+i}^4]. 
\end{align*}

Similarly, the \emph{$k$-moment with cutoff and bound, $E_{N,U}[\tau_s^k]$}, 
can be computed by a similar backward induction:
\begin{equation} \label{CutoffBound}
E_{N,U}[\tau_{s}^k]  = 
\begin{cases}
U, & \text{if } s > N, \\
0, & \text{if $s \leq N$ and $s$ is a prime}, \\
\dfrac{1}{6}\sum_{i=1}^6 E_{N,U}[(\tau_{s+i}+1)^k], 
& \text{ otherwise}.
\end{cases}
\end{equation}

Recall that we define $\tau := \tau_0$. Then 
$\E[\tau^k]$ is bounded by
\[  E_{N,L}[\tau^k]  < \E[\tau^k] < E_{N,U}[\tau^k],\]
where the constants $L$ and $U$ are such 
that $L< \E[\tau_{N+i}^k] < U$ for $1\leq i \leq 6$.
In our calculations, for a fixed $k$, we let $L=0$ and set
\begin{equation}\label{Uk}
  U = \sum_{p \geq N}(p-N)^k\cdot \left( \dfrac{5}{6}\right)^{\pi(N,p-1)},
\end{equation}
by a generalized version of Theorem~\ref{RUB}, where again the sum runs only
over primes~$p\geq N$.

Here are some values from these calculations, with the same cutoff $N=72\,000$ as before:
\begin{align*}
E_{N,0}[\tau^2] &=  12.14038078509277803704442588975179071670687740762333581..., \\
E_{N,0}[\tau^3] &=  112.6876397546569557440733688945276499594239506211770251..., \\
E_{N,0}[\tau^4] &=  1573.062088873773656629766311656284995093724703295686336...\ . 
\end{align*}
Finally, we compute the \emph{$k$-moment about the mean} 
from $E_{N,L}[\tau^k]$ and $E_{N,U}[\tau^k]$ for $1\leq k\leq4$. 

\begin{thm}\label{thm:var}
The variance $\Var[\tau]=\E[(\tau -\mu)^2]$ of the number~$\tau$ of dice rolls until the sum of numbers rolled hits a prime number is
\begin{align*}
&6.24277866827907531536207550162316820859170705225404294792974833539180268504\\
&3084296344217404858324926159539897268356048076272651677541964219102239023753\\
&8150823129805169337977697376635759276289142038450086336595557647986480561330\\
&8214713615488502648841008778242304977070680512491342799123790001786470816288\\
&5224615488139175457594174800583962822859180229549700981099011752384501628276\\
&5694376671306687034557773404644003165796252002392335685732122692406996087652\\
&4980983267488053212119688662172687277124471401986385949253110886122079573455\\
&8183558657032313925214702231074596255423433161780887995392005630694558656022\\
&2171296832425822580676336343147488874838134977214706359567201799753884888859\\
&7376068262952172908207551165495637029598903793960788200764734842418244317836\\
&5606913989830493801999271954005282246496563838470798159061047786587483859862\\
&3919718881489138788698837531524981369902424753167953880720908467872203812635\\
&1880854252547556091253560085758334601469355818565195936435876004694657291616\\
&56266565526996...\ .
\end{align*}
\end{thm}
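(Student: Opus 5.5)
Since $\Var[\tau]=\E[\tau^2]-\mu^2$ with $\mu=\E[\tau]$, the plan is to enclose $\E[\tau^2]$ and $\mu$ separately in rigorous intervals and then combine the two enclosures by interval arithmetic. For $\mu$ we already have the enclosure of Corollary~\ref{Cor} with $L=0$, $U=412$, $N=72\,000$. It has width $412\,\SP_N(0)\approx 1.3\cdot10^{-1029}$.

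For the second moment, the first step is a structure theorem analogous to Theorem~\ref{LU}. The recurrence~\eqref{CutoffBound} for $k=2$ is linear in the unknown sequence $E_{N,B}[\tau_s^2]$. The first-moment values $E_{N,\cdot}[\tau_{s+i}]$ enter only as an inhomogeneity, so we must fix a convention for them. We use the exact $\E[\tau_{s+i}]$, or equivalently bound them from above and below via Corollary~\ref{Cor}. Either way the homogeneous part is again solved by $\SP_N(s)$. Superposition then gives
\[
E_{N,B}[\tau^2]=E_{N,0}[\tau^2]+B\cdot \SP_N(0),
\]
up to the contribution of the first-moment boundary data, which is itself of order $U_1\SP_N(0)$.

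The cleanest rigorous route is to observe directly that
\[
\E[\tau^2]=\E[\tau^2\mathbf 1_{\tau\text{ stops}\le N}]+\E[\tau^2\mathbf 1_{\text{survive}}].
\]
The first term is exactly $E_{N,0}[\tau^2]$. For the second term, on survival we have $\tau=T+\tau'_{S}$, where $T\le N+1$ is the number of rolls needed to pass $N$ and $S\in\{N+1,\dots,N+6\}$ is the position reached. Hence
\[
0\le \E[\tau^2\mathbf 1_{\text{survive}}]\le \SP_N(0)\,\max_i \E\bigl[(N+1+\tau_{N+i})^2\bigr].
\]

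The next step is to bound $\E[\tau_{N+i}^2]$. We use the generalization~\eqref{Uk} of Theorem~\ref{RUB}: conditioned on stopping at the prime $p$, we have $\tau\le p-N$, so
\[
\E[\tau_N^2]\le\sum_{p\ge N}(p-N)^2(5/6)^{\pi(N,p-1)}.
\]
We evaluate this sum as before, computing the primes up to $100\,003$ exactly and bounding the tail integral with Proposition~\ref{PNT}, which gives a modest constant $U_2$. Together with $U_1=412$, this bounds the survival term by $\SP_N(0)\bigl((N+1)^2+2(N+1)U_1+U_2\bigr)\approx 10^{10}\cdot 3.2\cdot10^{-1032}$. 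This error is still below $10^{-1020}$. (If the paper's sharper bound $E_{N,0}+U\SP_N(0)$ is used, the error is even smaller.)

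The last step is to compute $E_{N,0}[\tau^2]$ and $E_{N,0}[\tau]$ in exact rational arithmetic and form the endpoint combinations
\[
\E[\tau^2]_{\rm low}-\mu_{\rm up}^2\quad\text{and}\quad \E[\tau^2]_{\rm up}-\mu_{\rm low}^2.
\]
Since $2\mu\cdot 412\SP_N(0)$ is also about $10^{-1028}$, the two endpoints agree to the roughly $1000$ digits displayed, which proves the stated expansion. The main obstacle is making the cross terms rigorous: the first-moment values inside the second-moment recursion are themselves only known with bounds. That is why I would use the probabilistic decomposition on the survival event, which needs only monotonicity and the crude bounds $U_1$, $U_2$, rather than an exact superposition identity.
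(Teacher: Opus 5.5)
Your argument is correct, and it differs from the paper's only in how the upper endpoint for $\E[\tau^2]$ is certified. The final combination $\E[\tau^2]_{\mathrm{low}}-\mu_{\mathrm{up}}^2<\Var[\tau]<\E[\tau^2]_{\mathrm{up}}-\mu_{\mathrm{low}}^2$ is exactly the paper's.

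The paper runs the recursion~\eqref{CutoffBound} a second time with boundary value $U=47\,004$, with the first-moment terms inside it carrying the boundary $U_1=412$. It then asserts the squeeze $E_{N,0}[\tau^2]<\E[\tau^2]<E_{N,U}[\tau^2]$ by analogy with the first moment, without proof.

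You instead prove the upper bound directly by the strong Markov property at the exit time $T$. This needs only the single run $E_{N,0}$ plus the a priori constants $U_1,U_2$. It also makes explicit how the first-moment boundary data enter, which is exactly the point the paper passes over.

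The price is sharpness. The paper's second run in effect computes the excess $E_{N,U}[\tau^2]-E_{N,0}[\tau^2]=U_2\SP_N(0)+2U_1\E[T\mathbf 1_{\text{surv}}]$ exactly. You instead bound $T\le N+1$ and also add $(N+1)^2\SP_N(0)$. Your interval therefore has width about $1.7\cdot10^{-1022}$, versus the paper's $5.3\cdot10^{-1025}$. That is still ample for the $1000$ displayed digits.

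Two side remarks in your proposal are wrong, though your final argument does not rely on them.

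First, \eqref{CutoffBound} keeps the constant term $1$ at every non-terminal step, so $E_{N,0}[\tau^2]=\E[(\tau\wedge T)^2]$. This is not exactly your first term: it exceeds $\E[\tau^2\mathbf 1_{\text{stop}\le N}]$ by $\E[T^2\mathbf 1_{\text{surv}}]$. Your enclosure still holds, since the lower bound follows from $\tau\wedge T\le\tau$ and the upper bound only loosens. Using this identity would also remove the $(N+1)^2$ term from your error.

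Second, the first-moment boundary data contribute $2U_1\E[T\mathbf 1_{\text{surv}}]$. That is of order $N U_1\SP_N(0)$, not $U_1\SP_N(0)$. This cross term is what dominates the paper's reported width; compare $U_2\SP_N(0)\approx1.5\cdot10^{-1027}$.

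Consequently your parenthetical ``sharper bound'' $E_{N,0}[\tau^2]+U\SP_N(0)$ is neither the paper's bound nor a valid one. It omits the term $2\E[T\,\E[\tau_S]\,\mathbf 1_{\text{surv}}]$. Since $T\ge 12\,001$ on survival, that term already exceeds $47\,004\,\SP_N(0)$ as soon as $\E[\tau_S]\ge2$.
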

\begin{proof}
With $N=72\,000, L=0$, and $U=47\,004$, we compute $E_{N,L}[\tau^2]$ and $E_{N,U}[\tau^2]$.
The boundary condition~$U$ was obtained by bounding the sum~\eqref{Uk} in the same way as in Section~\ref{sec:bounds}.
Then we use
\[
  E_{N,L}[\tau^2]- E_{N,U}[\tau]^2 \;<\; \E[(\tau -\mu)^2] 
  \;<\;  E_{N,U}[\tau^2]- E_{N,L}[\tau]^2,
\]
together with the values $E_{N,L}[\tau]$ and $E_{N,U}[\tau]$ computed before, to determine $\E[(\tau -\mu)^2]$. The difference between the upper and lower bounds is about $5.3095\cdot10^{-1025}$; hence the 1000 decimal places displayed above are correct.
\end{proof}

The value given in Theorem~\ref{thm:var} agrees with the variance $6.242778668279075$ 
computed by Alon and Malinovsky~\cite{AM}.

Similarly, with $N=72\,000, L=0$, and $U=8\,277\,786$, we obtained 
\begin{align*}
\E[(\tau -\mu)^3] &=
52.883600403482343095787225557797022461936541779671348157093675 \\
&\quad...\text{[888 digits]}...130397328589214378178269066540656443897611631378...\,. 
\end{align*}
The displayed 1000 decimal places are correct, because the difference between
the upper bound $E_{N,U}[\tau^3] - 3E_{N,L}[\tau]E_{N,L}[\tau^2] + 2E_{N,U}[\tau]^3$ and
the lower bound $E_{N,L}[\tau^3] - 3E_{N,U}[\tau]E_{N,U}[\tau^2] + 2E_{N,L}[\tau]^3$
is about $1.586\cdot10^{-1020}$.

Finally, with $N=72\,000, L=0$ and $U=2\,024\,915\,563$, we obtained
\begin{align*}
\E[(\tau -\mu)^4] &= 
803.66497701864425852778023577515897711148670589909491234675533 \\
&\quad...\text{[888 digits]}...2586039186804249080915313776083393221425650429534...,
\end{align*}
again correct to 1000 decimal places. This time, the interval between the two bounds has length $4.212\cdot10^{-1016}$.

The full supplementary results are available on
\url{https://www.thotsaporn.com/HitPrime.html}.

\section{Conclusion}
\label{sec:conclusion}
We developed an efficient one-variable dynamic-programming framework for computing hitting-time quantities in the dice-sum process, with a focus on prime targets. By combining backward recursion with truncation and rigorous bounds, we obtained certified high-precision estimates for $\mathbb{E}[\tau]$ and its higher moments, accurate to over $10^3$ decimal places.

The framework extends directly to a $k$-sided die for any integer $k\ge 2$ (as considered in \cite{Z1,Chern}), requiring only minor modifications to the recursion and enabling a systematic study of how the behavior depends on the number of faces.

Different target sets exhibit qualitatively different behavior. In particular, when the target set~$A$ consists of the Fibonacci numbers~$F_n$, their exponential growth ($F_n \sim \varphi^n/\sqrt{5}$ with $\varphi=\frac12(1+\sqrt{5})$ denoting the golden ratio) makes the set extremely sparse. Consequently, we anticipate that the expected number of rolls to hit a Fibonacci number is infinite. The average roll with a standard six-sided die is $\frac72$, and hence the probability of eventually hitting a particular number~$n$ approaches $\frac27$ as $n$ goes to infinity. Thus, asymptotically, a Fibonacci number is missed with probability~$\frac57$. Since consecutive Fibonacci numbers grow by a factor of approximately~$\varphi$, each miss increases the expected number of rolls to reach the next Fibonacci number by roughly the same factor. Heuristically, the expectation~$\E[\tau]$ behaves like a geometric series with ratio $\frac57\varphi\approx1.1557>1$, which is divergent. The situation changes when we use a die with fewer sides, e.g., a four-sided die: then we get the ratio $\frac35\varphi\approx0.9708<1$ and therefore conjecture that the expectation~$\E[\tau]$ is finite in this case.

A natural direction for future work is to determine conditions on the target set~$A$ under which the finiteness of $\E[\tau]$ is governed primarily by the growth rate of the elements of~$A$. For instance, one may ask whether such behavior holds whenever~$A$ contains no block of six consecutive integers. Under this type of condition, the asymptotic density and growth rate of~$A$ may become the main factors controlling the convergence or divergence of the expected hitting time.


\end{document}